\newtheorem{thm}{Theorem}
\newtheorem{lem}[]{Lemma}
\newtheorem{rem}[]{Remark}
\begin{document}

\title{On a limit theorem for a non-linear scaling}

\author{ Zbigniew J. Jurek (University of Wroc\l aw)}

\date{March 6, 2021}

\maketitle
\begin{quote} \textbf{Abstract.} In this note we proved that weak limits, of sums of  positive, independent and  identically distributed random variables which are re-normalized by a non-linear shrinking transform $\max(0, x-r)$,  are either degenerate or (some)  compound Poisson distributions.

\medskip
\emph{Mathematics Subject Classifications}(2020): Primary 60E05,\ 60E07,\ 60E10.

\medskip
\emph{Key words and phrases:} Laplace transform; weak convergence; functional equation.

\medskip
\emph{Abbreviated title:   On a limit  theorem for a non-linear scaling }

\end{quote}
\maketitle

Addresses:

Institute of Mathematics, University of Wroc\l aw,
Pl. Grunwaldzki 2/4

50-384 Wroc\l aw,  Poland; \ \  \   www.math.uni.wroc.pl/$\sim$zjjurek ;

e-mail: zjjurek@math.uni.wroc.pl

\vspace{2 cm}
In a probability theory  often one describes behaviors of normalized partial sums of random variables or vectors. Most often a normalization of partial sums is done be affine (linear) transforms thus a normalization is applied directly to each summand; cf. Jurek and Mason(1993) or Meerscheart and Scheffler (2001). In this note positive random variables are normalized by  a non-linear shrinking transformation (s-operation) and then partial sums are computed.  A  limit theorem for those sums is proved for non-negative independent and identically distributed variables; Theorem 1. The Laplace transform and some functional equations  are the  main tools in  proofs. [See also a historical note on the s-operations  in Section 6.]

\newpage
\textbf{1. A non-linear scaling $U_r$ and a theorem.}

For $r>0$, let us define  a non-linear scalings (shrinking mappings, in short: s-operations)
\begin{equation}
U_r: [0,\infty) \to [0,\infty) \ \  \mbox{by} \ \ U_r(x):= \max\{0, x-r\}\equiv (x-r)_+\,
\end{equation}
which have, among others, the one-parameter semigroup property:
\[
U_r(U_s(x))=U_{r+s}(x) \ \mbox{and} \ \  |U_r(x)-U_s(x)|\le |r-s|,  \ \ \mbox{for} \  0 \le x,r, s< \infty.
\]
If $X$ is a non-negative random variable on a fixed probability space $(\Omega, \mathcal{F}, P)$ then $U_r(X)$
 may model a situation when one receives  only an  excess  above some positive level $r$. Similarly,  such formula appears in  mathematical finance as European call options where $X$ represents a price of a stock, $r$ is a strike price, and $U_r(X)$ is an expected investor's gain. Hence, sums $S_n$,  in (2)  below,  may be interpreted as a gain from  portfolio of $n$ European  call options with the same strike price $r_n$.

\medskip
In this note we prove  the following:
\begin{thm}
Let  a sequence  $0< r_n \to   \infty$ and  $X_1, X_2,...,X_n, ...$ be a sequence of non-negative  independent identically distributed random variables. Then
\begin{equation}
S_n: = U_{r_n}(X_1)+  U_{r_n}(X_2)+...+ U_{r_n}(X_n) \Rightarrow S , \  \  \mbox{as} \  \ n\to \infty,
\end{equation}
if and only if either
\begin{equation}
(i) \ \  \emph{ there exists a constant} \ \  c>0 \  \emph{ such that}  \  S=c  \ \emph{with P.1},
\end{equation}
or
\begin{equation}
(ii) \ \  \emph{there exists an $a>0$ such that }\ \   S\stackrel{d}{=}e^{-a}\,\sum_{k=0}^\infty\, \frac{a^k}{k!}\, \nu^{\ast k},
\end{equation}
where  $\nu$  is an exponential probability distribution  with  a probability  density
$f(x):=\lambda e^{-\lambda x}\,1_{[0,\infty)}(x) \, , \ (\lambda>0)$.

In terms of Laplace transforms we have  either $\mathfrak{L}([S;t]=\exp(- ct)$ or  $\mathfrak{L}[S;t]=\exp a[\frac{\lambda}{\lambda +t}-1]$,  for $t\ge 0$.
\end{thm}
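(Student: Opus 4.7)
I would work throughout with Laplace transforms. Let $\bar F(x) := P(X_1 > x)$ and $\psi_n(t) := E[e^{-t U_{r_n}(X_1)}]$. A Fubini computation yields the clean identity
\[
\psi_n(t) = 1 - t\int_0^\infty e^{-ty}\bar F(r_n+y)\,dy.
\]
By L\'evy's continuity theorem for Laplace transforms, $S_n \Rightarrow S$ is equivalent to $\psi_n(t)^n \to L(t) := E[e^{-tS}]$ pointwise on $[0,\infty)$. Since $r_n \to \infty$ gives $\psi_n(t) \to 1$, this is in turn equivalent to $n(1-\psi_n(t)) \to \Psi(t) := -\log L(t)$, i.e.\ $\Psi(t)/t = \lim_n \int_0^\infty e^{-ty} h_n(y)\,dy$ with $h_n(y) := n\bar F(r_n+y)$. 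Applying the same reasoning to the sub-sum $S_{kn}$ (which also converges to $S$) and splitting into $k$ independent blocks of size $n$ shows $\psi_{kn}^n \to L^{1/k}$, so $L^{1/k}$ is a Laplace transform for every $k \in \{1,2,\ldots\}$ and $S$ is infinitely divisible; hence $\Psi(t) = bt + \int_0^\infty(1-e^{-ty})\nu(dy)$ in L\'evy--Khintchine form, and writing $N(y) := \nu((y,\infty))$ the previous identity reads $\Psi(t)/t = b + \int_0^\infty e^{-ty} N(y)\,dy$.

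The proof then splits on $\alpha := \lim_n n\bar F(r_n) \in [0,\infty]$. If $\alpha = +\infty$, the conditional excess $Y_n := (X_1 - r_n) \mid X_1 > r_n$ tends to $0$ in probability; its Laplace transform satisfies $1 - E[e^{-tY_n}] \sim t E[Y_n]$, and combined with the concavity of the Bernstein function $\Psi$, the only possible limit is $\Psi(t) = bt$ with $b > 0$, yielding case (i) with $c = b$. If $\alpha \in (0,\infty)$, then $h_n \to N$ on $(0,\infty)$ with $N(0+) = \alpha$ and $b = 0$; $S$ is then compound Poisson with intensity $\alpha$ and jumps having survival $G := N/\alpha$, and it remains only to show that $G$ is exponential.

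For this last step --- the main obstacle --- I would exploit full-sequence convergence. For each real $\mu > 0$, the subsequence $(S_{\lfloor\mu n\rfloor})$ still converges to $S$, so the same limits $\alpha$ and $G$ arise from the thresholds $r_{\lfloor\mu n\rfloor}$. Combining $\bar F(r_{\lfloor\mu n\rfloor})/\bar F(r_n) \to 1/\mu$ with the pointwise limit $\bar F(r_n+y)/\bar F(r_n) \to G(y)$, and using continuity and strict monotonicity of $G$ (which must be verified), the shifts $\Delta_\mu(n) := r_{\lfloor\mu n\rfloor} - r_n$ converge to some $\Delta_\mu$ with $G(\Delta_\mu) = 1/\mu$. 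Composing two scales via $r_{\lfloor\mu_2\lfloor\mu_1 n\rfloor\rfloor} - r_n = \Delta_{\mu_2}(\lfloor\mu_1 n\rfloor) + \Delta_{\mu_1}(n)$, whose left-hand side also tends to $\Delta_{\mu_1\mu_2}$ (using that $r_{n+1} - r_n \to 0$), yields the Cauchy-type identity $\Delta_{\mu_1\mu_2} = \Delta_{\mu_1} + \Delta_{\mu_2}$. Monotonicity then forces $\Delta_\mu = \kappa\log\mu$ for some $\kappa > 0$, and substituting into $G(\Delta_\mu) = 1/\mu$ gives $G(y) = e^{-y/\kappa}$; setting $\lambda := 1/\kappa$ and $a := \alpha$ produces $\Psi(t) = at/(\lambda+t)$, which is case (ii). The genuinely delicate points are: the continuity and strict monotonicity of $G$ (needed to invert it); the integer-part bookkeeping that extends the Cauchy equation from rational to all positive real scales; and, in the $\alpha = +\infty$ case, ruling out any non-trivial L\'evy tail via a careful analysis of the non-increasing family $(h_n)$ under full-sequence convergence. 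The converse (each of (i), (ii) is realized by some $F$) is a short direct verification with $\bar F$ chosen to have a suitably light or purely exponential tail.
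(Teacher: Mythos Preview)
Your route and the paper's share the same engine---exploit convergence along the full sequence and along shifted indices to force a functional equation---but the implementations differ in a substantive way. The paper never invokes infinite divisibility or the L\'evy--Khintchine representation. Instead it sets $H(u):=\lim_n n[F(u+r_n)-F(1+r_n)]$, performs a case split on $w:=\lim_n(r_{n+1}-r_n)$ (passing to a subsequence if needed), and by comparing indices $n$ and $n{+}1$ (or, when $w=0$, indices $n$ and a subsequence $k_n$ chosen so that $r_{k_n}-r_n\to a$ for a prescribed $a\in(0,1)$) obtains the equations $H(u+w)=H(u)+H(1+w)$ and $H(u+a)=b^{-1}H(u)+H(1+a)$. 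These are then solved by Acz\'el's classification, giving $H\equiv\mathrm{const}$ (degenerate $S$) or $H(u)=\alpha(e^{-\gamma u}-e^{-\gamma})$ (compound Poisson with exponential jumps). No inversion of any survival function is required.

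Your plan has two soft spots worth flagging. First, the derivation of the Cauchy relation for $\Delta_\mu$ presupposes that $G=N/\alpha$ is continuous and \emph{strictly} decreasing, but that is essentially the conclusion: a priori the L\'evy measure $\nu$ could have atoms or gaps in its support, so $G^{-1}$ need not be well defined. The clean fix is to skip the inversion altogether: from $h_{k_n}(y)=(k_n/n)\,h_n\bigl((r_{k_n}-r_n)+y\bigr)$ with $r_{k_n}-r_n\to a$ one reads off $G(y)=b\,G(a+y)$ directly, hence $G(a+y)=G(a)G(y)$, and exponentiality follows from mere monotonicity---this is precisely the paper's mechanism transcribed from $H$ to $G$. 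Second, in your $\alpha=+\infty$ branch the step ``$1-E[e^{-tY_n}]\sim tE[Y_n]$, hence $\Psi(t)=bt$'' is not justified: $Y_n\to 0$ in probability does not yield that asymptotic (no uniform integrability, and $E[Y_n]$ need not even exist), and concavity of $\Psi$ alone does not rule out a nontrivial L\'evy component. The paper sidesteps this branch entirely---its case split is on $w$, not on $\alpha$, and the degenerate limit emerges simply as the constant solution of the functional equation. (A smaller point: you write $\alpha:=\lim_n n\bar F(r_n)$ without arguing that this limit exists.)
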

In (2)  $" \Rightarrow"$  denotes a  weak convergence of distributions and in (4)  $"\stackrel{d}{=}"$ means equality of  probability distributions. A measure in (4) is  a particular compound Poisson measure $e(m)$, where $m: =a\nu$.

\medskip
 \textbf{2. Laplace transforms and  the sufficiency part of Theorem 1.}

\noindent For  study of the weak convergence of non-negative random variables it is convenient to use their Laplace transforms. In that case,  weak convergence of measures is  equivalent to a point-wise convergence of  their Laplace transforms; for instance  cf. Feller (1966), Theorem 2a, p. 410.

Note that  (1)  and a simple calculation give  a (cumulative)  probability distribution function of $U_r(X)$:
\begin{equation}
P(U_r(X)\le x)= P(X\le x+r) \  \ \mbox{for} \ \   \ x\ge 0.
\end{equation}
Note that at zero there is jump of a size $F(r)$.

Recall that the Laplace transform of a random variable $Y$ is defined as

\noindent $ \mathfrak{L}[Y; t] := \mathbb{E}[e ^{-t Y}], \  \mbox{for } \ t\ge0. $
Hence
\begin{equation}
\mathfrak{L}(U_r(X);t) = 1 -\int_{[0,\infty)}[1-e^{-tx}]dF(x+r); \ \ F(x):=P(X\le x), \ \  x\ge 0
\end{equation}
This is so, because by (1) and (5) we get
\begin{multline*}
\mathfrak{L}(U_r(X); t)]= F(r) +  \int_{(0,\infty)}e^{-tx}dF(x+r) =
1- \int_{(r,\infty)}[1- e^{-t(y-r)}]dF( y)  \\ =1- \int_{(0,\infty)}[1- e^{-tx}]dF( x+r) =  1- \int_{[0,\infty)}[1- e^{-tx}]dF( x+r) .
\end{multline*}
[Note that the function under last integral sign vanishes at zero].

\medskip
\noindent Hence, from (6), for  $r_n>0$  and  independent  identically distributed  variables $(X_k), k=1,2,...,n$  with  a  distribution function $F$ , we have
\begin{multline}
\mathfrak{L}(S_n; t)= ( \mathbb{E}[e^{ -t\,U_{r_n} (X_1)}])^n   = \Big(  1 - \int_{[0,\infty)} [ 1- e^{-tx}] dF(x+r_n)\Big)^n  \\ =  \Big(  1 -  \frac{1}{n}\, [\, \int_{[0,\infty)}
   \frac{1- e^{-tx}}{1- e^{-x}} dG_n(x)\,]\Big)^n ,   \  \mbox{where distributions  $G_n$ are defined as}\ \\
G_n(y):= n \int_0^y (1-e^{-x})dF(x+r_n),   \ y \ge 0;\ \  \,\, G_n(\infty)\le n(1-F(r_n)).
\end{multline}
Thus $(G_n)$ are  continuous distribution function of finite measures on $[0,\infty)$.

\medskip
 Here are  examples of normalizing   sequences  $(r_n)$  and  random variables$(X_n)$  such that  $S_n \Rightarrow S $ with limits  $S$    described in Theorem 1 by  formulas   (4) and (3), respectively.

\medskip
\textbf{Example 1.} Let $(X_n)$ be i.i.d. exponentially distributed with probability density $\lambda e^{-\lambda x} 1_{[0,\infty)}(x)$ and for given $a>0$,  let us choose $r_n>0$ such that
$e^{- \lambda r_n}= a/n,$
for all n such that $a/n < 1$.
Then by (7), the  limit  of  corresponding distributions $(G_n)$ is  as follows
\begin{multline*}
\lim_{n\to \infty} G_n(y) =\lim_{n\to \infty} n\int_0^y(1-e^{-x})\lambda e^{- \lambda (x+r_n)}dx \\ = a \int_0^y(1-e^{-x})\lambda e^{- \lambda x}dx = G(y),  \ \  \ \ \mbox{for}     \ \   0\le y<\infty. \qquad  \qquad \qquad
\end{multline*}
Finally,  we have the Laplace transform of  a  limit $S$:
\begin{multline*}
\mathfrak{L}[S;t]= \exp\int_0^\infty \frac{1-e^{-tx}}{1-e^{-x}}G(dx) \\ = \exp(-a\int_0^\infty (1-e^{-tx})\lambda e^{-\lambda x}dx= \exp a[\frac{\lambda}{\lambda +t}-1].
\end{multline*}
Here we recognize that $\exp [ \frac{\lambda}{\lambda +t}-1]$ is a Laplace transform of  compound Poisson variable
$ \sum_{k=0}^{N_{1}} \,Y_k$, where $(Y_n)$  are i.i.d.  exponentially distributed with parameter $\lambda$ and independent of a  Poisson variable $N_{1}$.

\medskip
\medskip
\textbf{Example 2.} Let random variables  $ (X_n)$ be independent with the same (cumulative) distribution function $F$ which has the probability density

\noindent $(\frac{2}{\pi})^{1/2}\exp(-x^2/2)1_{(0,\infty)}(x)$. Note that a function $ k(x):= (\frac{2}{\pi})^{1/2}x^{-2}e^{-x^2/2}$ on $(0,\infty)$ is continuous and strictly decreasing  from  infinity to zero. Thus for a given $c>0$ we may choose $r_n$ as the solution of the equation $ k(x)= c/n$, that is,
$ n \, (\frac{2}{\pi})^{1/2}(r_n^2\, e^{r_n^2/2})^{-1}= c$.

Taking into account the above, from (7), $G(0):=\lim_{n\to \infty}G_n(0)= 0$ and  for $y> 0$ we have
\begin{multline*}
G(y):= \lim_{n\to \infty} G_n(y)= \lim_{n\to \infty}(\frac{2}{\pi})^{1/2}  n \int_0^y(1-e^{-x}) \exp(- (x+r_n)^2/2)dx \\=
\lim_{n\to \infty} (\frac{2}{\pi})^{1/2} n \exp(-r_n^2/2)\int_0^y(1-\exp(-x))\exp(-x^2/2)\exp(-r_nx)dx \\
= \lim_{n\to \infty} (\frac{2}{\pi})^{1/2} n \exp(-r_n^2/2)\int_0^y\frac{1-\exp(-x)}{x} x \exp(-x^2/2)\exp(-r_nx)dx \\
\mbox{(using substitution $xr_n=:u$ we get) } \\
=\lim_{n\to \infty}(\frac{2}{\pi})^{1/2 }\,n \, (r_n^{2}\exp (r_n^2/2))^{-1}\,\int_0^{yr_n}\frac{1-\exp(-u/r_n)}{u/r_n}  \exp(- \frac{u^2}{2r_n^2})u \exp(-u)du\\= c \lim_{n\to \infty} \int_0^\infty [\, 1_{(0, yr_n]}(u)\, \frac{1-\exp(-u/r_n)}{u/r_n}  \exp(- \frac{u^2}{2r_n^2}),]\, u \exp(-u)du \\= c \int_0^\infty u \exp(-u)du =  c, \ \  \mbox{for} \ y>0.
\end{multline*}
All in all, in this example,  $G(dx)=c\delta_0(dx)$ is a distribution function of a measure concentrated at zero with a mass $c$. Consequently,
$$
\mathfrak{L}[S;t]= \exp \int_0^\infty\frac{1-e^{-tx}}{1-e^{-x}}G(dx)= \exp(-ct), t\ge 0 ,
$$
or in other words,  the limit  $S=c$ with P.1.

This completes a proof of the necessity part of Theorem 1.

\medskip
\medskip
\textbf{3. Some functional equations and their solutions.}

\medskip
From now on we assume that $S_n\Rightarrow S$ which (by (7))  is equivalent to
\begin{equation}
\mathfrak{L}(S;t)=  \exp\big(- \lim_{n\to \infty}\int_{[0,\infty)}
   \frac{1- e^{-tx}}{1- e^{-x}} dG_n(x)\big), \ \  t \ge0,
\end{equation}
where $G_n$ are distribution functions of finite measures on the positive half-line.

\begin{lem} (i)  If  $S_n \Rightarrow S$,  in (2),  then distribution functions $(G_n)$ are uniformly bounded, that is,  $\sup_{(n \ge 1)} G_n ( + \infty)\le K < \infty $.

\noindent (ii) If $G$ is   a limit point of $\{G_n\}$ then $G(\{\infty\})=0$.
\end{lem}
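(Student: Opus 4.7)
The plan for part (i) is to specialize formula (7) at $t=1$. Since the integrand $(1-e^{-tx})/(1-e^{-x})$ reduces to the constant $1$ on $(0,\infty)$ and $G_n$ carries no mass at $0$ (immediate from its defining integral), (7) yields
\[
\mathfrak{L}(S_n;1)=\Bigl(1-\frac{G_n(+\infty)}{n}\Bigr)^n.
\]
Weak convergence $S_n\Rightarrow S$ forces the left side to converge to $L:=\mathfrak{L}(S;1)>0$, the strict positivity coming from $P(S<\infty)=1$. Suppose, toward a contradiction, that $G_{n_k}(+\infty)\to\infty$ along some subsequence. If $G_{n_k}(+\infty)/n_k$ stays bounded away from $0$ along a further subsequence, then $(1-G_{n_k}(+\infty)/n_k)^{n_k}\to 0$; otherwise $G_{n_k}(+\infty)/n_k\to 0$ and a Taylor expansion gives $n_k\log(1-G_{n_k}(+\infty)/n_k)\sim -G_{n_k}(+\infty)\to-\infty$. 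Either alternative contradicts $L>0$, so $\sup_n G_n(+\infty)\le K<\infty$.

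For part (ii), I would use (i) to view $\{G_n\}$ as a uniformly bounded family of finite measures on the one-point compactification $[0,\infty]$, so that any subsequence admits a further subsequence converging weakly to some finite measure $G$ on $[0,\infty]$. The key ingredient is the elementary bound
\[
\frac{1-e^{-tx}}{1-e^{-x}}\ge 1-e^{-tx}\ge 1-e^{-tM}\qquad(x\ge M>0,\ t>0),
\]
valid since $1-e^{-x}\le 1$. Inserting it into (8) gives
\[
\int_{[0,\infty)}\frac{1-e^{-tx}}{1-e^{-x}}\,dG_n(x)\ge (1-e^{-tM})\,G_n((M,\infty)).
\]
Along the convergent subsequence, (8) makes the left-hand side tend to $-\log\mathfrak{L}(S;t)$, while the Portmanteau inequality for the open set $(M,\infty]\subset[0,\infty]$ yields $\liminf_k G_{n_k}((M,\infty))=\liminf_k G_{n_k}((M,\infty])\ge G((M,\infty])$. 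Hence $-\log\mathfrak{L}(S;t)\ge (1-e^{-tM})\,G((M,\infty])$; letting $M\to\infty$ produces $-\log\mathfrak{L}(S;t)\ge G(\{\infty\})$, and then $t\to 0^+$ combined with $\mathfrak{L}(S;0^+)=1$ forces $G(\{\infty\})\le 0$, as required.

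The most delicate point is the interplay of limits in (ii): one must work on the compact space $[0,\infty]$ to guarantee existence of subsequential weak limits, and apply Portmanteau in precisely the direction that bounds $G(\{\infty\})$ from above. The elementary inequality $(1-e^{-tx})/(1-e^{-x})\ge 1-e^{-tx}$ is what makes the argument go through: it strips off the denominator that vanishes at $x=0$ and leaves an integrand bounded below by an explicit constant on $[M,\infty)$, so that tail mass of $G_n$ can be controlled by a single scalar quantity coming from the limiting Laplace transform.
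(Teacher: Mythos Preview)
Your argument for part (i) is correct and essentially identical to the paper's: both specialize (7) at $t=1$ to obtain $\mathfrak{L}(S_n;1)=(1-G_n(+\infty)/n)^n$ and derive a contradiction from $G_{n_k}(+\infty)\to\infty$. You are simply more explicit than the paper about why the right-hand side tends to $0$.

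For part (ii) your proof is also correct, but it follows a different route from the paper. The paper observes that the integrand $h_t(x):=(1-e^{-tx})/(1-e^{-x})$ extends to a bounded continuous function on the compactification $[0,\infty]$, with $h_t(0)=t$ and $h_t(\infty)=1$. Weak convergence $G_{n_k}\Rightarrow G$ on $[0,\infty]$ therefore yields directly
\[
\mathfrak{L}(S;t)=\exp\Bigl(-\int_{[0,\infty]}h_t\,dG\Bigr)=\exp\Bigl(-\int_{[0,\infty)}h_t\,dG\Bigr)\cdot\exp\bigl(-G(\{\infty\})\bigr),
\]
and letting $t\to 0$ (so $h_t\to 0$ on $[0,\infty)$) gives $1=\exp(-G(\{\infty\}))$. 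Your approach replaces this continuity argument by the one-sided estimate $h_t(x)\ge 1-e^{-tM}$ on $(M,\infty)$ together with the Portmanteau lower bound for the open set $(M,\infty]$, then lets $M\to\infty$ and $t\to 0^+$. Both arguments are short; the paper's is slightly more direct once one notices the continuous extension of $h_t$ to $[0,\infty]$, while yours has the virtue of not relying on that observation and would adapt to integrands that do not extend continuously to the boundary.
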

\begin{proof} (i)  In contrary, assume that there exists sub-sequence $n_k, \  k= 1,2,...$ such that $ \lim _{k\to \infty} G_{n_k}(+ \infty)=  \infty $ then from (6) we get that
$$\lim_{k\to \infty} \mathfrak{L}(S_{n_k}; 1)= \lim_{k\to \infty} \big ( 1-\frac{G_{n_k }(\infty)}{n}\big)^n=  0$$
which contradict the fact that Laplace transform are positive functions.

Hence distribution functions  $(G_n)$   of  finite measures on  the positive half-line and uniformly bounded (conditionally compact).

(ii) Let   $G$ on
$[0,\infty]$  be  a weak limit  of  subsequence of distribution functions $(G_n)$.  Then
\begin{multline*}
\mathfrak{L}(S; t) =\lim_{n\to \infty} \mathfrak{L}(S_n;t) =  \exp (- \int_{[0,\infty]} \frac{1- e^{-tx}}{1-e^{-x}} dG(x)) \\ = \exp (-\int_{[0,\infty)} \frac{1- e^{-tx}}{1-e^{-x}} dG(x))  \cdot \, \exp(- G(\{\infty\})).
\end{multline*}
Taking $t \to 0$ , the function under integral sign  tends to zero  and hence we must have  $1= \mathfrak{L}(S; 0)= \exp (-G(\{\infty\})$ which means that $G(\{\infty\})=0$. This completes a proof of  Lemma 1.
 \end{proof}
\begin{rem}
\emph{It may be worthy to recall that a family of probability measures on compact topological  space  is compact convolution semigroup, in weak topology; cf. Parthasarathy(1967), Theorem 6.4. p. 45.}
\end{rem}
\begin{rem}
\emph{ For a discussion below recall that for (probability) distribution functions $F_n$ and $ F$  we  have}:

\noindent \emph{(a)  if $F_n \Rightarrow F, \  a_n \to a$ , $a$ is  continuity point of $F$,  then $F_n(a_n)\to F(a)$;}

\noindent \emph{(b) if $F_n$ converges to $F$ point wise and $F$ is continuous then $F_n \to F$ uniformly.}
\end{rem}

From Lemma 1 and the limit in (7) we have that  $G_n \Rightarrow G$ (weak convergence)  to a distribution function of finite (not necessarily probability) measure $G$. To identify $G$ we introduce auxiliary functions:
\begin{multline}
H_n(u):= \int_1^u \frac{1}{1-e^{-x}}dG_n(x); \ \ H_n(u)\to  \ H(u):= \int_1^u \frac{1}{1-e^{-x}}dG(x);  \\  \  \ 0<  u<\infty, \ \
\mbox{and hence} \ \  \ G(u) - G(1) = \int_1^u (1- e^{-x})dH(x).  \quad
\end{multline}
In particular from Remark 2, we have that  if $u$ is a continuity point of $H$ (note that $G$ may have a jump at $u$)  and $u_n\to u$ then  $H_n(u_n)\to H(u)$.

\medskip
\noindent From  (7),  for $   1\le u<\infty $,   we have that
\begin{equation}
 H(u)= \lim_{n\to \infty}H_n(u) = \lim_{n\to\infty} n[F(u+r_n)-F(1+r_n)]
\end{equation}
\begin{lem}
Assume that $ w_n:=r_{n+1}-r_n \to w \in [0,\infty]$ ( or choose a sub-sequence). Then
\begin{equation}
H(u+w)=H(u)+ H(1+w) ,  \ 0<u < \infty
\end{equation}
\end{lem}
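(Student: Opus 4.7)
The plan is to exploit the decomposition $r_{n+1}=r_n+w_n$ and to compare two different ways of extracting $H$ from the sequence $F(\,\cdot\,+r_{n+1})$. First I apply (10) with the index shifted up by one: for each fixed $u>0$,
\[
H(u)=\lim_{n\to\infty}(n+1)\bigl[F(u+r_{n+1})-F(1+r_{n+1})\bigr].
\]
Since $(n+1)/n\to 1$ and the bracket is $O(1/n)$ by Lemma~1, the factor $n+1$ may be replaced by $n$. Substituting $r_{n+1}=r_n+w_n$ then gives
\[
H(u)=\lim_{n\to\infty} n\bigl[F((u+w_n)+r_n)-F((1+w_n)+r_n)\bigr].
\]

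Second, I telescope through the reference point $1+r_n$, rewriting the right-hand expression as
\[
n\bigl[F((u+w_n)+r_n)-F(1+r_n)\bigr]-n\bigl[F((1+w_n)+r_n)-F(1+r_n)\bigr].
\]
Each term has the form $H_n(v_n)$ with $v_n\to v$, for $v=u+w$ and $v=1+w$ respectively, where $H_n(v):=n[F(v+r_n)-F(1+r_n)]$ is the very function whose pointwise limit is $H$.

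Third, and this is the crux, if $v$ is a continuity point of $H$ and $v_n\to v$, then $H_n(v_n)\to H(v)$. The argument is a monotone squeeze: the map $v\mapsto H_n(v)$ is non-decreasing, $H$ is non-decreasing with at most countably many jumps, and for any $\varepsilon>0$ one can sandwich $v_n$ between continuity points $v^-<v<v^+$ of $H$ with $H(v^+)-H(v^-)<\varepsilon$; then pointwise convergence $H_n(v^\pm)\to H(v^\pm)$ together with continuity of $H$ at $v$ finishes the squeeze. Combining the three steps yields $H(u)=H(u+w)-H(1+w)$ at every pair of continuity points, and (11) extends to every $u>0$ by the right-continuous convention on $H$ (or by density of continuity points).

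The main obstacle I anticipate is this monotone-squeeze step, in which the finiteness from Lemma~1 and the jump structure of $H$ must be handled carefully; everything else is bookkeeping. A separate, easy case is $w=\infty$: then $H(\infty)=\int_1^\infty (1-e^{-x})^{-1}dG(x)$ is finite (because $(1-e^{-x})^{-1}$ is bounded on $[1,\infty)$ and $G$ has finite total mass), and the identity degenerates formally to $H\equiv 0$, consistent with case~(i) of Theorem~1.
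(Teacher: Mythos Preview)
Your proof is correct and follows essentially the same route as the paper: shift the index in (10), use $r_{n+1}=r_n+w_n$, telescope through the reference point $1+r_n$, and invoke the convergence $H_n(v_n)\to H(v)$ at continuity points (which the paper records just before Lemma~2 via Remark~2). The only cosmetic differences are that the paper splits $(n+1)=n+1$ rather than replacing $n+1$ by $n$, and it leaves the monotone-squeeze justification and the $w=\infty$ case implicit where you spell them out.
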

\begin{proof}
Since $r_{n+1}= w_n +r_n $, from (10), we infere that
\begin{multline*}
H(u)= \lim_{n\to \infty} (n+1) [F(u+r_{n+1})-F(1+r_{n+1}] \\ =
\lim_{n\to \infty} \Big[
 n\,[F(u+ w_n+r_n)- F(1+w_n+r_n)]  +  [F(u+r_{n+1})-F(1+r_{n+1}] \Big] \\
 =\lim_{n\to \infty} \big( n\,[F(u+ w_n+r_n)-  F(1+r_n)] - n [F(1+w_n+r_n)-F(1+r_n)]  \big)\\=
H(u+w)-H(1+w), \ \ \mbox{for} \  0 < u <\infty  \ \mbox{and} \ \  0\le w\le \infty. \qquad
\end{multline*}
\end{proof}
What are possible functions $H$ satisfying  a functional equation (11)?   We consider the three possible cases for a limit $w$.
\begin{lem}
 If $ w_n:=r_{n+1}-r_n \to w \in ( 0,\infty]$ (or choose a subsequence) then  $H(u)=0$ and  $H(u)=H(\infty)$ are  the only solutions to a  functional equation $ H(u)+H(1+w)=H(u+w)$, where $0<u<\infty$.
\end{lem}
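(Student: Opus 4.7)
My approach will be to exploit the monotonicity of $H$ together with the fact, guaranteed by Lemma 1, that the limit measure $G$ is finite; since $(1-e^{-x})^{-1}\le (1-e^{-1})^{-1}$ on $[1,\infty)$, this yields the uniform bound $0\le H(u)\le (1-e^{-1})^{-1}G(\infty)<\infty$ for every $u\ge 1$. I will split the argument into the two sub-cases $w\in(0,\infty)$ and $w=\infty$.

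In the finite case $w\in(0,\infty)$ I would iterate (11) by substituting $u:=1+jw$ for $j=0,1,\ldots,k-1$; a short induction then yields $H(1+kw)=k\,H(1+w)$ for every positive integer $k$. Since the left-hand side stays below the uniform bound while $k\to\infty$, and since $H(1+w)\ge 0$ by monotonicity, this forces $H(1+w)=0$. Feeding this back into (11) leaves $H(u+w)=H(u)$ for every $u>0$, that is, $H$ is periodic with period $w>0$. A non-decreasing periodic function on $(0,\infty)$ must be constant, and because $H(1)=0$ by the very definition of $H$, the constant is zero; in particular $H(\infty)=0$ as well, which matches both of the candidate solutions in the statement.

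For $w=\infty$ I would return to the derivation of (11) in the previous lemma rather than to its compact form: as $w_n\to\infty$, both $u+w_n$ and $1+w_n$ diverge, so $H(u+w_n)-H(1+w_n)\to H(\infty)-H(\infty)=0$, while the chain of equalities proving (11) identifies this difference with $H(u)$. Hence again $H\equiv 0=H(\infty)$.

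The step I expect to be the main obstacle is the passage from periodicity plus monotonicity to constancy: one must check that, for any $0<u<v$, an integer $k$ can be chosen with $v\le u+kw$, so that $H(v)\le H(u+kw)=H(u)\le H(v)$ and hence $H(u)=H(v)$. Everything else reduces to routine substitution into (11) and a single appeal to Lemma 1(i).
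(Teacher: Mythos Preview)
Your argument is correct and follows essentially the same route as the paper's proof: iterate the functional equation to obtain $H(u+kw)=H(u)+kH(1+w)$ (you do the special case $u=1$, the paper keeps $u$ general), use finiteness of $H(\infty)$ to force $H(1+w)=0$, deduce periodicity, and combine with monotonicity to get constancy. Your additional remark that $H(1)=0$ pins the constant down to $0$ is a nice sharpening; for $w=\infty$ the paper simply reads \eqref{eq} at $w=\infty$ as $H(\infty)=H(u)+H(\infty)$, which is exactly the content of your limiting argument, so there is no need to revisit the derivation of Lemma~2.
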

\begin{proof}
A case $w=\infty$  is obvious as $H(\infty)$  is finite so  we get $H(u)=0$.

\noindent Let $0<w<\infty$. Then by the induction argument we have
\[
H(u+kw)=H(u)+k H(1+w), k\ge 1
\]
Since left hand side has a finite limit $H(\infty)$ as $ k\to \infty$ therefore $H(1+w)=0$.  Consequently, $H(u+kw)=H(u), k\ge 1$ which gives $H(u)=H(\infty)=const $ and completes a proof.
\end{proof}

\medskip
For the reaming case $w=0$  (i.e.,  $\lim_{n\to \infty} w_n =0$; cf. Lemma 2) we need the following elementary fact.

\medskip
\textbf{Fact.} If $0<r_n \nearrow \infty$ and $r_{n+1}-r_n \to 0$ as $ n\to \infty$ the for each  number $0<c<1$ there exists a sub-sequence $(k_n)$  such that $r_{k_n}-r_n \to c$.
\begin{proof}
Choose explicitly  $k_n:=\sup\{k\ge n: r_k-r_n<c\}$. Then we have that  $r_{k_n}-r_n <c \le r_{k_n +1}-r_n$ and  a length of an interval containing $c$  is $r_{k_n+1}-r_{k_n} \to 0$ as $n\to \infty$, which completes an argument.
\end{proof}
\begin{lem}
If  $r_{n+1}-r_n \to 0$ then for any number $0<a<1$ $a$  there exists $b\ge1$ (depending on $a$)  such that
\begin{equation}
H(u+a)= b^{-1}H(u) +H(1+a),  \ \ 0<u<\infty.
\end{equation}
\end{lem}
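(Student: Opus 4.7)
\medskip
\noindent\textbf{Proof plan for Lemma 5.}

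The strategy is to exploit the Fact just established: for the given $a\in(0,1)$, extract a subsequence $(k_n)$ with $r_{k_n}-r_n\to a$. Because $(r_n)$ is strictly increasing and $r_{k_n}>r_n$, one automatically has $k_n\ge n$, hence $k_n/n\ge 1$. Writing $r_{k_n}=r_n+a+\varepsilon_n$ with $\varepsilon_n\to 0$ and passing to a further subsequence, I may assume $k_n/n\to b\in[1,\infty]$ by compactness of the extended half-line. The target equation (12) is then obtained by rewriting the limit defining $H(u)$ along the subsequence $(k_n)$ in terms of $r_n$.

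Concretely, formula (10) applied along $(k_n)$, combined with the identity $H_n(v)=n[F(v+r_n)-F(1+r_n)]$, gives
\[
H(u)=\lim_{n\to\infty}k_n\bl F(u+r_{k_n})-F(1+r_{k_n})\br=\lim_{n\to\infty}\frac{k_n}{n}\,\bl H_n(u+a+\varepsilon_n)-H_n(1+a+\varepsilon_n)\br.
\]
For every $u>0$ such that both $u+a$ and $1+a$ are continuity points of the non-decreasing limit $H$, the convergence $H_n\to H$ at continuity points combined with $\varepsilon_n\to 0$ yields, by the argument of Remark 2(a),
\[
H_n(u+a+\varepsilon_n)-H_n(1+a+\varepsilon_n)\longrightarrow H(u+a)-H(1+a).
\]
Hence $H(u)=b\bl H(u+a)-H(1+a)\br$, and dividing by $b$ produces (12). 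Right-continuity of $H$ extends the identity from this dense set to every $u>0$.

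The delicate step is the perturbation argument, i.e.\ upgrading $H_n(v)\to H(v)$ at a fixed continuity point to $H_n(v+\varepsilon_n)\to H(v)$; this is standard for distribution functions of uniformly bounded measures, the required bound $\sup_n G_n(+\infty)<\infty$ being supplied by Lemma 1(i). A secondary issue is the potential value $b=+\infty$: inspection of $H(u)=b[H(u+a)-H(1+a)]$ shows that in this case $H(u+a)=H(1+a)$ for every $u>0$, i.e.\ $H$ is constant on $[1+a,\infty)$, which is exactly (12) with the convention $b^{-1}=0$ and, as will emerge when (12) is solved, corresponds to the degenerate limit $S=\mathrm{const}$ of Theorem 1(i).
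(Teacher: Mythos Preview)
Your argument is correct and follows essentially the same route as the paper's proof: pick $(k_n)$ via the Fact so that $v_n:=r_{k_n}-r_n\to a$, rewrite $H(u)$ along $(k_n)$ as $(k_n/n)\bl H_n(u+v_n)-H_n(1+v_n)\br$, and use the perturbation argument of Remark~2(a) to identify the bracket limit as $H(u+a)-H(1+a)$. The only minor difference is that the paper does not pass to a further subsequence to force $k_n/n\to b$; it instead infers convergence of $k_n/n$ directly from the existence of the other two limits, whereas your compactness-plus-subsequence route and your explicit handling of continuity points and of the boundary case $b=+\infty$ are somewhat more careful than the paper's presentation.
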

\begin{proof} From previous Fact, there exist a sub- sequence $(k_n)$ such that $ v_n:=r_{k_n}-r_n\to a$. Furthermore,
\begin{multline*}
H(u)=\lim_{n\to \infty} k_n[F(u+r_{k_n})-F(1 + r_{k_n})]\\
=\lim_{n\to \infty}\frac{ k_n}{n} \big[ n[F(u+v_n+r_n)- F(1  + r_n)] -   n [F( 1+v_n+r_n)-F(1+r_n)] \big].
\end{multline*}
Since  limits in  square brackets [ ...] exist and  are $H(u+a)$ and $H(1+a)$, respectively, we infer that $b:=\lim_{n
\to \infty}\frac{k_n}{n}$ exists. Finally,
we get a formula $H(u)=b\,H(u+a)- H(1+a)$,
 which completes  a proof of the  lemma.
\end{proof}
\begin{rem}
Note that for $b=1$ the equation (10) coincides with the one  given in Lemma 3.
\end{rem}
To solve the functional equation (10) we need some facts that are quoted in the Appendix and from them we have:

\begin{lem}
The only solutions to an equation (10) in Lemma 4 are
\begin{equation}
H(u)= const  \ \mbox{or} \ \ H(u)= \alpha( e^{-\gamma u} - e^{- \gamma}),  \  \alpha<0, \  \gamma >0.
\end{equation}
\end{lem}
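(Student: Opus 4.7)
The plan is to reduce the functional equation of Lemma~5, namely
$$H(u+a)=b^{-1}H(u)+H(1+a), \qquad u>0,\ a\in(0,1),$$
(with $b=b(a)\ge 1$) to a pure multiplicative Cauchy equation, then identify the solution using monotonicity and boundedness of $H$ together with the normalization $H(1)=0$ (which follows from the definition of $H$ as $\int_1^{\cdot}(1-e^{-x})^{-1}dG(x)$).

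First I would iterate the equation: by a simple induction,
$$H(u+ka)=b^{-k}H(u)+H(1+a)\sum_{j=0}^{k-1}b^{-j}, \qquad k\ge 1.$$
If $b(a)=1$ for some $a$, this collapses to $H(u+ka)=H(u)+kH(1+a)$, and boundedness of $H$ forces $H(1+a)=0$; then $H$ is constant along the progression $\{u+ka\}_{k\ge 0}$, and monotonicity of $H$ (inherited from each $H_n$) forces $H$ to be globally constant, giving the first alternative in (13). Hence I may assume $b(a)>1$ for every $a\in(0,1)$. Letting $k\to\infty$ in the iterated identity shows $H(\infty)=H(1+a)/(1-b(a)^{-1})$, and setting $\phi(u):=H(u)-H(\infty)$ the equation collapses to
$$\phi(u+a)=b(a)^{-1}\phi(u), \qquad u>0,\ a\in(0,1). \quad (\ast)$$

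The next step is to exploit that $a$ is a free parameter. Writing $\beta(a):=b(a)^{-1}\in(0,1)$ and computing $\phi(u+a_1+a_2)$ in two ways from $(\ast)$ yields $\beta(a_1+a_2)\phi(u)=\beta(a_1)\beta(a_2)\phi(u)$ for all $u$, so (provided $\phi\not\equiv 0$) $\beta(a_1+a_2)=\beta(a_1)\beta(a_2)$ whenever $a_1+a_2<1$. Since $H$ is nondecreasing, so is $\phi$, and then $(\ast)$ forces $a\mapsto\beta(a)$ to be monotone on $(0,1)$. Invoking the fact on multiplicative Cauchy equations quoted in the Appendix, one obtains $\beta(a)=e^{-\gamma a}$ for a unique $\gamma\ge 0$.

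Finally, if $\gamma=0$ then $\phi$ is invariant under every shift by $a\in(0,1)$, hence constant; combined with $\phi(1)=-H(\infty)$ and $\phi(\infty)=0$ this forces $H\equiv 0$, which is the constant alternative. If $\gamma>0$, relation $(\ast)$ reads $\phi(u+a)=e^{-\gamma a}\phi(u)$, and propagating from $u=1$ gives $\phi(u)=\phi(1)e^{\gamma(1-u)}$; using $\phi(1)=-H(\infty)$ one obtains
$$H(u)=H(\infty)\bigl(1-e^{\gamma(1-u)}\bigr)=\alpha\bigl(e^{-\gamma u}-e^{-\gamma}\bigr),\qquad \alpha=-e^{\gamma}H(\infty)<0,$$
the second alternative in (13); the strict inequality $\alpha<0$ uses $H(1)=0$ together with $H$ nondecreasing and not identically zero, so that $H(\infty)>0$. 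The main obstacle is the regularity step in passing from $(\ast)$ to $\beta(a)=e^{-\gamma a}$: one must verify that $\beta$ inherits enough regularity (monotonicity, or at least measurability) from $H$ to rule out pathological Hamel-type Cauchy solutions, which is precisely why the Appendix facts are invoked.
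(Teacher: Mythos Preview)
Your argument is correct and reaches the same conclusion, but the route differs from the paper's.  The paper simply recognizes the equation $H(u+a)=b^{-1}H(u)+H(1+a)$ as an instance of the Aczel equation $f(x+y)=\phi(y)f(x)+\psi(y)$ recorded in the Appendix (Lemma~6), with $f:=H$, $\phi(a):=b^{-1}$, $\psi(a):=H(1+a)$, and reads off the general solution $H(u)=\alpha e^{\gamma u}+c$, $\phi(a)=e^{\gamma a}$; the constraints $e^{\gamma a}=b^{-1}\le 1$, $H(1)=0$, and $H(u)<0$ for $0<u<1$ then pin down the signs and give (13).  You instead iterate, subtract the limit $H(\infty)$ to obtain the homogeneous relation $\phi(u+a)=\beta(a)\phi(u)$, and then extract a multiplicative Cauchy equation for $\beta$, which you solve via monotonicity.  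Your path is longer but more self-contained: it avoids the full strength of Aczel's theorem, makes explicit how the boundedness and monotonicity of $H$ exclude pathological solutions, and handles the case $b(a)=1$ separately and transparently.  The paper's path is a one-line citation of a stronger black box.  One small point: the Appendix as written does not isolate the multiplicative Cauchy equation, so where you write ``quoted in the Appendix'' you should either cite that classical fact independently or observe that it is the special case $\psi\equiv 0$ of Lemma~6(i).
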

\begin{proof}
Using  Appendix for

$f(u):= H(u), \psi(y):=H(1+y), \ \mbox{and $\phi$ such that} \  \phi(a):=b^{-1},$

\noindent we retrieve our equation (11). Thus  from (12),  $H(u)=const $ if $f_1(x)=0$.

In a case $f_1(x)=\alpha e^{\gamma x}$ we get $e^{\gamma a}=b^{-1} <1$  which implies  $\gamma <0$ and  that gives
\[
 H(u)=\alpha e^{\gamma u}+\beta\  \mbox{with restrictions} \,   H(1)=0; \,  H(u)<0 , \ \mbox{for} \   0<u<1;
\]
From first restriction we get $H(u)=\alpha(e^{\gamma u}-e^\gamma)$. Thus the second restriction imposes that $\alpha<0$ which completes arguments for (13).
\end{proof}

\textbf{4. Proof of the necessity part of Theorem 1.}

From Lemma 2, we have  that
$G(y)-G(1)=\int_1^ y(1-e^{-x}) dH(x), y>0.
$
and  from Lemma 5 we know forms of   $H$ ( introduced in Lemma 2). Consequently,
we have  that either $G(y)=c>0$ (constant) for $ y>0$ and $G(0)=0$ or
\[
G(y)=G(1)+ \int_1^y (1-e^{-x})(-\alpha \gamma)e^{-\gamma x}dx;   \  \ G(0)=0.
\]
This may be a written as
$$
G(y)= - \alpha \gamma \int_0^y (1-e^{-x})e^{-\gamma x} dx;  \ \ y\ge 0;   \ \alpha <0, \ \gamma >0.
$$
Using (7)  we have
\[
\mathfrak{L}(S;t)= \exp(- ct) \ \ \ \ \mbox{or} \ \ \mathfrak{L}(S;t)=\exp (-\alpha)[\frac{\gamma}{t+\gamma}-1],  \  \alpha<0, \ \gamma>0,
\]
which completes a proof of Theorem 1.

\medskip
\textbf{5. Appendix.}

For  ease of reference let us quote  the following fact.
\begin{lem}
Let functions $f$, $\phi$ and  $\psi$ be defined  on  real (or positive) numbers  and satisfy  equations
\begin{equation}
(i)  \ \ f(x+y)=\phi(y)f(x)+ \psi(y) \ \ \mbox{or} \ \  (ii) \
\  f(x+y)= f(x)+ \psi(y)
\end{equation}
If $f$ is continuous (or bounded on a set of positive  Lebesque measure) then  solutions to equation (14), part (i), are
\[
f(x)= \alpha e^{\gamma x} +c, \  \phi(x)=e^{\gamma x},\   \psi(x)= c[1-e^{\gamma x}];   \ (\alpha, \gamma, c \ \mbox{are constants}).
\]
and for part (ii) solutions are
\[
f(x)=\gamma x +c, \  \  \psi(x)=\gamma x , \ \  \ \ (\gamma, c \ \mbox{are constants});
\]
\end{lem}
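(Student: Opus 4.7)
The plan is to reduce both equations to standard Cauchy-type functional equations and invoke the classical regularity theorem, which asserts that continuous (or Lebesgue-measurable, or bounded on a set of positive measure) solutions of the additive Cauchy equation $g(x+y)=g(x)+g(y)$ are necessarily of the form $g(x)=\gamma x$, and the corresponding solutions of the multiplicative Cauchy equation $\phi(x+y)=\phi(x)\phi(y)$ are $\phi\equiv 0$ or $\phi(x)=e^{\gamma x}$. Once this reduction is accomplished, the displayed formulas drop out by back-substitution.

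For part (ii) the reduction is immediate. Setting $x=0$ in $f(x+y)=f(x)+\psi(y)$ (and using $\psi(0)=0$, which follows by taking $y=0$) yields $\psi(y)=f(y)-f(0)$. Plugging this back in gives $f(x+y)=f(x)+f(y)-f(0)$, so that the shifted function $g(x):=f(x)-f(0)$ satisfies $g(x+y)=g(x)+g(y)$. The regularity of $f$ is inherited by $g$, so $g(x)=\gamma x$, and hence $f(x)=\gamma x+c$ with $c=f(0)$ and $\psi(x)=\gamma x$.

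For part (i) I would perform a two-step reduction. Setting $x=0$ produces $\psi(y)=f(y)-\phi(y)f(0)$. Substituting back and writing $g(x):=f(x)-f(0)$ yields $g(x+y)=\phi(y)g(x)+g(y)$. Exploiting the symmetry of the left-hand side in $x$ and $y$, I would equate $\phi(y)g(x)+g(y)$ with $\phi(x)g(y)+g(x)$, which rearranges to $g(x)\bigl[1-\phi(y)\bigr]=g(y)\bigl[1-\phi(x)\bigr]$. If $g\equiv 0$ then $f$ is constant and the claim holds with $\alpha=0$. Otherwise, pick any $x_0$ with $g(x_0)\neq 0$; this forces $\phi(x_0)\neq 1$, and the identity above immediately yields $g(y)=\alpha\bigl(1-\phi(y)\bigr)$ with $\alpha:=g(x_0)/[1-\phi(x_0)]$. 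Inserting this form into $g(x+y)=\phi(y)g(x)+g(y)$ and simplifying collapses to $\phi(x+y)=\phi(x)\phi(y)$. Continuity of $\phi$ follows from that of $f$ via the explicit formula $\phi(y)=[g(x_0+y)-g(y)]/g(x_0)$, so the classical theorem gives $\phi(x)=e^{\gamma x}$. Back-substitution then produces $f(x)=\alpha e^{\gamma x}+c$ (after absorbing constants) and $\psi(x)=c[1-e^{\gamma x}]$, as claimed.

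The main obstacle is really just bookkeeping: properly handling the degenerate sub-cases ($g\equiv 0$, or $\phi\equiv 1$, in which case (i) degenerates to (ii)), and citing the correct regularity version of the Cauchy result that matches the stated hypothesis (continuity suffices, as does boundedness on a set of positive Lebesgue measure by the classical theorem of Sierpi\'nski). All genuinely non-trivial content is deferred to that classical result.
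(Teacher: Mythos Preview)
Your argument is correct and is essentially the standard reduction to Cauchy's equations that one finds in Acz\'el's books. Note, however, that the paper does not actually prove this lemma: it is stated in the Appendix purely for reference and immediately followed by a citation (``cf.\ Aczel (1962), Theorem 8 on p.\ 22 or Aczel (1966), Chapter 3, Theorem 1 on p.\ 150''), with no proof given. So there is nothing to compare against; you have supplied a self-contained proof where the paper simply defers to the literature.

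Two minor remarks on your write-up. First, in the degenerate case $g\equiv 0$ (i.e.\ $f$ constant) the original equation only forces $\psi(y)=c[1-\phi(y)]$ and leaves $\phi$ unconstrained, so strictly speaking $\phi$ need not be exponential there; this is harmless for the application in the paper but worth flagging if you want a clean statement. Second, under the weaker hypothesis ``bounded on a set of positive Lebesgue measure'' you should be slightly careful when transferring regularity from $f$ to $\phi$ via $\phi(y)=[g(x_0+y)-g(y)]/g(x_0)$: boundedness of $g$ on a set $E$ gives boundedness of $\phi$ only on $E\cap(E-x_0)$, and one needs a Steinhaus-type argument (or simply pass through measurability) to ensure this set is non-negligible. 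Both points fall under the ``bookkeeping'' you already anticipated.
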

\noindent cf. Aczel (1962), Theorem 8 on  p. 22 or  Aczel (1966), Chapter 3, Theorem 1 on p.150.

\medskip
\textbf{6.  A historical note.}
A problem of the  characterization of  all possible limit distributions for sequences  $U_{r_n}(X_1)+  U_{r_n}(X_2)+...+ U_{r_n}(X_n) $ for \underline{positive} variables  was posed by Kazimierz  Urbanik around 1971. (For many years the notation $T_r$ instead of $U_r$ was used.)
Proofs presented in this  paper  are taken from MS Thesis [5] written in Polish language in 1972 and they were never publish before in English. References [3] and [7]-[9] are added to this article to point out the current state of the problem.

\medskip
On the other hand, the Reader should be aware that
the definition of shrinking s-transforms $U_r$ have a natural generalization to vector valued random variables $X$ as follows:  $U_r(0):=0$ and
$$
U_r(X):=\max(||X||-r, 0)\frac{X}{||X||}, \ X \neq 0; \  \ \mbox{where $||\cdot ||$ denotes a norm.}
$$
Urbanik problem for  a Hilbert space valued, not necessarily identically distributed,  random vectors  was  completely  solved in 1977 where in the proof the Choquet Theorem on extreme points in convex compact sets was used;  cf. Jurek (1981).

However, knowing solutions for real random variables it is not obvious how to infer characterizations for positive variables. Furthermore, Hilbert space generality may not be accessible for a wider audience. Those concerns
led us to the decision to present here elementary proofs for non-negative  random variables from [5].

Let us also add here that more recently, in Bradley and Jurek (2015), stochastic independence of variables in Theorem 1 was replaced by the weak dependence (strong mixing).

Last but not least, potential applications in a mathematical finance (mentioned in the Introduction) of which the Author was not aware 50 years ago,  may generate some additional interest for this quite straightforward and elementary proofs.

\medskip
\medskip
\textbf{ Acknowledgments.} Reviewer's comments lead to a better presentation of some arguments and  improved the language of this note.

\medskip
\textbf{References.}

\medskip
[1] \ J. Aczel (1962), \emph{On applications and  theory of functional equations},

Birkhauser Verlag,  Basel  1969.

\medskip
[2] \ J. Aczel (1966), \emph{Lectures on functional equations and their}

\emph{applications},  Academic Press, New York.

\medskip
[3] \  R. C. Bradley and Z.J. Jurek (2015), On a central limit theorem

for shrunken weakly dependent random variables, \emph{Houston Journal of}

\emph{ Mathematics}, vol. 41, No. 2, pp. 621-638.

\medskip
[4] \  W. Feller(1966), \emph{An Introduction to Probability Theory},  vol. 2,

J. Wiley $\&$  Sons, New York 1966.

\medskip
[5] \ Z. J. Jurek (1972), On a class of limit distributions, University of

Wroclaw, MS Thesis (in Polish), University Archives, call number:

\emph{W IV-5200/Jurek Zbigniew;} (To Archives  e-mail address: auw@uwr.edu.pl)

\medskip
[6] \  Z. J. Jurek (1981), Limit distributions for sums of shrunken random

variables, \emph{Dissertationes Mathematicae},vol. CLXXXV, Polskie

Wydawnictwo Naukowe, Warszawa, 1981,  pp. 50;

(accepted for a  publication on  November 29, 1977).

\medskip
[7] \ Z. J. Jurek and J. D. Mason (1993), \emph{Operator- limit distributions in}

\emph{ probability theory}, J. Wiley $\&$ Sons, Inc. New York

\medskip
[8] M. M. Meerscheart and H-P Schefller (2001), \emph{Limit distributions for}

\emph{sums of independent random vectors},  J. Wiley$\&$ Sons, Inc. New York.

\medskip
[9] \  K. R. Parthasarathy (1967), \emph{Probability measures on metric spaces},

Academic Press, 1967.

\end{document}